\newtheorem{defn}{Definition}[section]
\newtheorem{lemma}[defn]{Lemma}
\newtheorem{theorem}[defn]{Theorem}
\newtheorem{proposition}[defn]{Proposition}
\newtheorem{condition}[defn]{Condition}
\numberwithin{equation}{section}
\begin{document}
	
\title{Morawetz estimates and stabilization for damped Klein-Gordon equation with small data}
\author{Yan Cui}
\thanks{Yan Cui is supported by Science and Technology Projects in Guangzhou (No. 2023A04J1335), by the Open Project Program of Shanghai Key Laboratory for Contemporary Applied Mathematics (No. SKLCAM202403002)}
\address{Department of Mathematics, Jinan University, Guangzhou, P. R. China.}
\email{cuiy32@jnu.edu.cn}

\author{Bo Xia}
\address{School of Mathematical Sciences,USTC, Hefei, P. R. China.}
\email{xiabomath@ustc.edu.cn,xaboustc@hotmail.com}
\thanks{Bo Xia was supported by NSFC No. 12171446.}

\begin{abstract}
	In the present paper, we show that the global solution to (partially) damped Klein-Gordon equation on the three dimensional Euclidean space with small data decays exponentially. The key ingredients in the proof are: Morawetz-type estimates for solutions with small data and Ruiz's unique continuation principle for wave equations. 
\end{abstract}
\maketitle


\section{Introduction and main results}
	In this article, we will consider the stabilization problem for damped Klein-Gordon equation
	\begin{equation}\label{eq:kg:int}
		\left\{
			\begin{split}
				\partial^2_{tt}u(t,x)-\Delta u(t,x)&+\alpha(x)\partial_tu(t,x)+u(t,x)=u^3(t,x)\\
				\big(u(0,x),\partial_tu(0,x)\big)&=\big(u_0(x),u_1(x)\big)
			\end{split}
		\right.\ \ \ \ \ \  (t,x)\in\mathbb{R}\times\mathbb{R}^3
	\end{equation}
	in the energy space with small initial data. Throughout the whole paper, our assumption on the damping term $\alpha(x)$ is
	\begin{condition}\label{condition:1}
		There exist positive constants $\Lambda_1,\Lambda_0$ and $R$ such that the nonnegative function $\alpha\in L^\infty(\mathbb{R}^3)$ satisfies
		\begin{equation*}
			\Lambda_0\leq \alpha(x)\leq \Lambda_1,\ \forall x\in\mathbb{R}^3\backslash B_R
		\end{equation*}
		where $B_R$ is the ball centered at the origin with radius $R$ in $\mathbb{R}^3$.
	\end{condition}
	
	The stationary solution of \eqref{eq:kg:int} obeys
	\begin{equation}\label{eq:kg:sta}
		-\Delta\varphi +\varphi =\varphi^3\ \ \mathrm{on}\ \mathbb{R}^3.
	\end{equation}
	This equation admits a unique positive solution denoted by $Q$ (see \cite{Coffman1972}). We now introduce the variational characterization of $Q$. For any $\varphi\in H^1(\mathbb{R}^3)$, denote
	\begin{equation*}
		J[\varphi]:=\int_{\mathbb{R}^3}\left[\frac{|\nabla\varphi|^2+\varphi^2}{2}-\frac{\varphi^4}{4}\right]dx
	\end{equation*}
	and
	\begin{equation*}
		K[\varphi]:=\int_{\mathbb{R}^3}\left( |\nabla\varphi|^2+\varphi^2 -\varphi^4\right)dx.
	\end{equation*} 
	Then the infimum 
	\begin{equation*}
		h_0:=\inf\left\{J[\varphi]:\varphi\in H^1\backslash\{0\}, K[\varphi]=0 \right\}
	\end{equation*} 
	is strictly positive and it is attained by $Q$ (see \cite[Section 2.2]{nakanishischlag2011}).
	
	For a solution $u$ of \eqref{eq:kg:int}, we denote $\vec{u}=(u,\partial_tu)$ and define the energy to be
	\begin{equation*}
		E[\vec{u}(t)]:=\int_{\mathbb{R}^3}\left[\frac{|\nabla_{t,x}u(t,x)|^2+u(t,x)^2}{2}-\frac{u(t,x)^4}{4}\right]dx
	\end{equation*} 
	and the linear energy to be 
	\begin{equation*}
			E_L[\vec{u}(t)]:=\int_{\mathbb{R}^3}\frac{|\nabla_{t,x}u(t,x)|^2+u(t,x)^2}{2}dx.
	\end{equation*}
	
	Our stabilization result begins with solving \eqref{eq:kg:int}.
	\begin{proposition}\label{thm:1}
		Under Condition \ref{condition:1}, the following two sets
		\begin{equation*}
			\mathcal{PS}^+:=\left\{(u_0,u_1)\in H^1\times L^2: E[(u_0,u_1)]<h_0,K[u]\geq 0 \right\}
		\end{equation*}
		and
		\begin{equation*}
			\mathcal{PS}^+:=\left\{(u_0,u_1)\in H^1\times L^2: E[(u_0,u_1)]<h_0,K[u]< 0 \right\}
		\end{equation*}
		are invariant under the flow of \eqref{eq:kg:int} as long as the flow is defined. What's more, one has the following dichotomy of dynamics
		\begin{itemize}
			\item the solution starting from $\mathcal{PS}^+$ exists for all time;
			\item the solution starting from $\mathcal{PS}^-$ blows up in finite time.
		\end{itemize}
	\end{proposition}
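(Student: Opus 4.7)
I would adapt the classical Payne--Sattinger/Nakanishi--Schlag variational framework to the damped equation; the one genuinely new difficulty is the blow-up statement, where the dissipative term $\alpha\partial_tu$ obstructs a direct application of Levine's concavity argument.

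After local well-posedness in $H^1\times L^2$ by a standard Strichartz/contraction argument (the damping $\alpha\partial_tu$ with $\alpha\in L^\infty$ is a bounded zeroth-order perturbation), pairing \eqref{eq:kg:int} with $\partial_tu$ yields the dissipation identity
\begin{equation*}
E[\vec u(t)]-E[\vec u(s)]=-\int_s^t\!\int_{\mathbb R^3}\alpha(x)(\partial_tu(\tau,x))^2\,dx\,d\tau\le 0,
\end{equation*}
so the nonlinear energy is non-increasing along the flow. Combined with $J[u(t)]=E[\vec u(t)]-\tfrac12\|\partial_tu(t)\|_{L^2}^2$ this gives $J[u(t)]\le E[\vec u(0)]<h_0$ on the entire maximal existence interval.

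Invariance of $\mathcal{PS}^\pm$ then follows by a continuity argument: were $t\mapsto K[u(t)]$ to change sign, there would be a first crossing time $t_0$ with $K[u(t_0)]=0$. The alternative $u(t_0)\not\equiv 0$ is ruled out because the variational characterization of $h_0$ forces $J[u(t_0)]\ge h_0$, contradicting $J[u(t_0)]<h_0$; the alternative $u(t_0)\equiv 0$ is ruled out by the Sobolev lower bound $\|u(t)\|_{H^1}^2<\|u(t)\|_{L^4}^4\lesssim\|u(t)\|_{H^1}^4$ valid wherever $K<0$, which keeps $\|u(t)\|_{H^1}$ away from $0$. Global existence on $\mathcal{PS}^+$ is immediate from the algebraic identity $J[u]=\tfrac14\|u\|_{H^1}^2+\tfrac14K[u]$: on $\{K\ge 0\}$ this bounds $\|u(t)\|_{H^1}^2\le 4E[\vec u(0)]$, while $\|\partial_tu(t)\|_{L^2}^2\le 2E[\vec u(0)]$, so the full $H^1\times L^2$ norm stays bounded and the blow-up alternative of LWP closes the argument.

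The main obstacle is finite-time blow-up in $\mathcal{PS}^-$. I would proceed in two steps. A variational coercivity lemma, proved by scaling $u\mapsto\lambda u$ and using the explicit identity $J[u_{\lambda^*}]-J[u]=K[u]^2/(4\|u\|_{L^4}^4)$ at the unique $\lambda^*\in(0,1)$ with $K[u_{\lambda^*}]=0$, together with Sobolev embedding and $J[u_{\lambda^*}]\ge h_0$, yields a uniform estimate $-K[u(t)]\ge\delta>0$ with $\delta$ depending only on $h_0-E[\vec u(0)]$. To absorb the damping, introduce the weighted second moment
\begin{equation*}
F(t):=\int_{\mathbb R^3}u(t,x)^2\,dx+\int_0^t\!\int_{\mathbb R^3}\alpha(x)u(\tau,x)^2\,dx\,d\tau,
\end{equation*}
so that $F'(t)=2\int u\partial_tu\,dx+\int\alpha u^2\,dx$ and, after substituting the PDE, the cross terms cancel identically, leaving $F''(t)=2\|\partial_tu(t)\|_{L^2}^2-2K[u(t)]\ge 2\delta>0$. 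Combining $F''\ge 2\delta$ with Cauchy--Schwarz, $(\int u\partial_tu)^2\le\|u\|_{L^2}^2\|\partial_tu\|_{L^2}^2$, and the $L^\infty$ bound on $\alpha$, one verifies the Levine concavity inequality $F(t)F''(t)\ge(1+\epsilon)F'(t)^2$ for some $\epsilon=\epsilon(h_0-E[\vec u(0)],\Lambda_1)>0$; the standard lemma then implies $F^{-\epsilon}$ must vanish in finite time, producing the claimed blow-up.
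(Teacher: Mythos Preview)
The paper does not actually prove this proposition: it simply remarks that the result follows by adapting the Payne--Sattinger argument and cites \cite{nakanishischlag2011} and \cite{cui2024}. Your sketch is therefore already more detailed than what the paper offers, and the invariance and global-existence parts are correct and match the intended route.

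There is, however, a gap in the blow-up argument. Your modified functional $F(t)=\|u\|_{L^2}^2+\int_0^t\!\int\alpha u^2$ is the right object, and the computation $F''=2\|\partial_tu\|_{L^2}^2-2K[u]$ is correct; the cross terms do cancel. But the final claim that ``Cauchy--Schwarz, $(\int u\partial_tu)^2\le\|u\|_{L^2}^2\|\partial_tu\|_{L^2}^2$, and the $L^\infty$ bound on $\alpha$'' yield $FF''\ge(1+\epsilon)(F')^2$ does not go through as stated. The term $\int\alpha u^2$ in $F'$, if bounded crudely by $\Lambda_1\|u\|_{L^2}^2$, produces a contribution $\Lambda_1^2\|u\|_{L^2}^4$ in $(F')^2$ with no matching term in $FF''$. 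What actually closes the inequality is a \emph{space--time} weighted Cauchy--Schwarz: writing $\int\alpha u(t)^2=\int\alpha u_0^2+2\int_0^t\!\int\alpha uu_s$ and applying Cauchy--Schwarz in $L^2(\mathbb R^3)\oplus L^2([0,t]\times\mathbb R^3,\alpha\,dx\,ds)$ gives $(F'-c_0)^2\le 4F\bigl(\|\partial_tu\|_{L^2}^2+\int_0^t\!\int\alpha(\partial_su)^2\bigr)$ with $c_0=\int\alpha u_0^2$. The dissipated energy $\int_0^t\!\int\alpha(\partial_su)^2$ then reappears with the correct sign in $F''$ via $E[\vec u(t)]=E[\vec u(0)]-\int_0^t\!\int\alpha(\partial_su)^2$, and the lower bound $\|u\|_{H^1}^2\ge 4h_0>4E[\vec u(0)]$ on $\mathcal{PS}^-$ (which your scaling argument does deliver) gives the needed gap. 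One must still absorb the constant $c_0$, typically by augmenting $F$ with a term like $(T_0-t)\int\alpha u_0^2$ or by a time-shift argument. None of this uses the $L^\infty$ bound on $\alpha$ in the way you suggest.
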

	This result can be proved by adapting Payne-Sattinger argument to \eqref{eq:kg:int}. One can refer to \cite{nakanishischlag2011} for a standard introduction about this argument in dealing with Klein-Gordon equation over the whole Euclidean space. See \cite{cui2024} for another adaption of this argument to a system of damped Klein-Gordon equations. 
	
	Let $u$ be a global solution of \eqref{eq:kg:int} with data in $\mathcal{PS}^+$. For each time $t$, we write
	\begin{equation*}
		E[\vec{u}(t)]=\frac{1}{4}K[u(t)]+\frac{1}{2}E_L[\vec{u}(t)] +\frac{1}{4}\int_{\mathbb{R}^3}|\partial_{t}u|^2dx
	\end{equation*}
	which implies immediately
	\begin{equation}\label{eq:E:El}
		E_L[\vec{u}(t)]\leq 2E[\vec{u}(t)].
	\end{equation}
	What's more, using the multiplier method, we have the identity for each time $T>0$
	\begin{equation}\label{eq:id}
		E[\vec{u}(T)]=E[\vec{u}(0)]-A[\vec{u},0,T]
	\end{equation}
	where the energy decrement $A[\vec{u},0,T]$ is given as
	\begin{equation}
		A[\vec{u},0,T]:=\int_0^T\int_{\mathbb{R}^3}\alpha(x)|\partial_tu|^2dx.
	\end{equation}
	In particular, this identity implies that the energy of $u$ is non-increasing in time. 
	In the present paper, we show in a further step that the energy of this long-time solution $u$ with small data not only decays, but also it decays exponentially.
	  	
	\begin{theorem}\label{thm:main} 
		Under Condition \ref{condition:1}, there exists a small number $\epsilon>0$ such that any global solution, starting from $\mathcal{PS}^+$ with initial data satisfying $\|(u_0,u_1)\|_{H^1\times L^2}<\epsilon$, decays exponentially.
	\end{theorem}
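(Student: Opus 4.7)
The plan is to reduce the theorem to a uniform observability inequality of the form
\begin{equation*}
E[\vec u(0)] \leq C \, A[\vec u, 0, T_0],
\end{equation*}
valid for every $\mathcal{PS}^+$-solution with $\|(u_0,u_1)\|_{H^1\times L^2}<\epsilon$ and for some fixed constants $T_0>0$ and $C>0$. Once this is available, the energy identity \eqref{eq:id} yields $E[\vec u(T_0)]\leq (1-1/C)\,E[\vec u(0)]$; since $E$ is non-increasing and $\mathcal{PS}^+$ is invariant, the same inequality applies on every interval $[nT_0,(n+1)T_0]$, and iterating together with \eqref{eq:E:El} produces the claimed exponential decay of the $H^1\times L^2$ norm.

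To obtain the observability inequality I would argue by contradiction and compactness. Assume a sequence $u_n$ of $\mathcal{PS}^+$-solutions with $\|(u_{0,n},u_{1,n})\|_{H^1\times L^2}<\epsilon$ and $A[\vec u_n,0,T_0]/E[\vec u_n(0)]\to 0$. Setting $\lambda_n:=E[\vec u_n(0)]^{1/2}$ and $v_n:=u_n/\lambda_n$, the rescaled unknowns satisfy
\begin{equation*}
\partial_{tt}v_n-\Delta v_n+\alpha\,\partial_t v_n+v_n=\lambda_n^2\, v_n^3,
\end{equation*}
with $E_L[\vec v_n(0)]\asymp 1$ and $\int_0^{T_0}\!\!\int\alpha|\partial_t v_n|^2\,dx\,dt\to 0$. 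Since $\lambda_n\leq\epsilon$, the nonlinear term is small, and combined with a Morawetz-type space-time estimate for the essentially linear damped Klein-Gordon flow this should provide integrability beyond the energy class and uniform local compactness in $L^2_{t,x}$. After extracting a subsequence, $v_n\rightharpoonup v_\infty$ weakly in the energy space and strongly in $L^2_{\mathrm{loc}}$ on $[0,T_0]\times\mathbb R^3$; the limit $v_\infty$ solves the linear damped Klein-Gordon equation and, by Condition \ref{condition:1}, satisfies $\partial_t v_\infty\equiv 0$ on $(0,T_0)\times(\mathbb R^3\setminus B_R)$.

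On the damping region $v_\infty$ is therefore time-independent, so $w:=\partial_t v_\infty$ again solves the linear damped Klein-Gordon equation and vanishes on $(0,T_0)\times(\mathbb R^3\setminus B_R)$. For $T_0$ chosen large enough (essentially $T_0>2R$), Ruiz's unique continuation principle forces $w\equiv 0$ on $(0,T_0)\times\mathbb R^3$. Hence $v_\infty$ is time-independent globally and satisfies the elliptic equation $-\Delta v_\infty+v_\infty=0$ on $\mathbb R^3$, which together with $v_\infty\in H^1$ yields $v_\infty\equiv 0$. It remains to upgrade the convergence globally: the Morawetz estimate, which captures local energy decay and, combined with the damping norm on the exterior of $B_R$, controls the behavior of $v_n$ at spatial infinity, should then force $E_L[\vec v_n(0)]\to 0$, contradicting the normalization.

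The main obstacle is the passage to the limit in a genuinely global setting. On $\mathbb R^3$ there is no compactness of $H^1$ in $L^2$, so preventing loss of energy at spatial infinity and inside $B_R$ (where no damping is present) requires the Morawetz estimate to play a dual role: it must provide space-time integrability strong enough to absorb the cubic nonlinearity uniformly in $n$, and yield enough local compactness to feed into Ruiz's unique continuation. Designing the virial multiplier so that the damping term $\alpha\,\partial_t u$ produces a favorable sign in the Morawetz identity, and calibrating the observability horizon $T_0$ to match the geometric threshold in Ruiz's quantitative unique continuation, are the technical points I expect to require the most care.
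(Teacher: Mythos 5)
Your overall strategy --- reduce to an observability inequality, iterate it via the energy identity \eqref{eq:id} and the monotonicity of $E$ together with \eqref{eq:E:El}, and prove observability by a contradiction--compactness argument combining Morawetz estimates with Ruiz's unique continuation principle --- is exactly the paper's plan, and the iteration step you describe is sound. However, the compactness argument as written has two genuine gaps, both of which the paper resolves by a different choice of normalization.

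First, you rescale by $\lambda_n := E[\vec u_n(0)]^{1/2}$, so the only thing you can hope to contradict is the normalization $E_L[\vec v_n(0)]\asymp 1$. But your limit $v_\infty$ is obtained only through weak convergence in the energy space and strong convergence in $L^2_{\mathrm{loc}}((0,T_0)\times\mathbb R^3)$; showing $v_\infty\equiv 0$ therefore does \emph{not} contradict the normalization, since the energies $E_L[\vec v_n(0)]$ can stay of size one while the sequence escapes to spatial infinity or concentrates in the gradient/time-derivative components that weak convergence fails to control. You flag this as "the main obstacle," but you do not supply a mechanism to close it, and the Morawetz estimates alone do not force $E_L[\vec v_n(0)]\to 0$. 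The paper avoids the difficulty entirely by normalizing with the localized spacetime quantity
$\lambda_n := \bigl(\int_0^T\|u_n\|_{L^2(|\cdot|\leq 4R)}^2\,dt\bigr)^{1/2}$,
so that $\int_0^T\|w_n\|_{L^2(|\cdot|\leq 4R)}^2\,dt = 1$ passes to the limit under the compact embedding $H^1((0,T)\times B_{4R})\hookrightarrow L^2((0,T)\times B_{4R})$, and $w\equiv 0$ is an immediate contradiction. This normalization is the natural one given the weak observability inequality of Proposition \ref{prop:obs:weak}: failure of full observability is equivalent, via that estimate, to blow-up of the ratio between the localized $L^2$ norm and $A[\vec u_n,0,T]$, and the proposition is also what supplies the $H^1_{t,x}$ bound on $\{w_n\}$ that feeds the compactness.

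Second, you assert that the limit $v_\infty$ solves the \emph{linear} damped Klein--Gordon equation, on the grounds that the nonlinear term is small. That requires $\lambda_n\to 0$, which is not guaranteed: $\lambda_n$ is merely bounded (by $\epsilon$ in your normalization, by $\sqrt{2T\epsilon_0}$ in the paper's), and along a subsequence converges to some $\lambda\in[0,\epsilon]$ that may be strictly positive. When $\lambda>0$, the stationary limit equation is $-\Delta v_\infty + v_\infty = \lambda^2 v_\infty^3$, which admits nontrivial finite-energy solutions, and your conclusion $v_\infty\equiv 0$ is unjustified. The paper handles this by splitting into the cases $\lambda=0$ and $\lambda>0$; in the latter it applies the Gagliardo--Nirenberg inequality to show the $H^1$ norm of any nontrivial solution is bounded below by a universal constant, and then uses the smallness of $\epsilon$ to rule this branch out. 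Both issues are structural, and fixing them essentially amounts to adopting the paper's normalization and its two-case analysis of the limit elliptic equation.
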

	
	{Although this result is stated in \cite{ain11}, their proof does not work in the present case: the global solution in $\mathcal{PS}^+$ with small data. Since we did not find any proof in some other place neither, we present the proof explicitly here, which turns out not be an easy task. The main difficulty arises from the focusing nature of \eqref{eq:kg:int}. This seems to be even harder in the case of large data, for which one can find in \cite{cui2024} a concrete treatment, including the derivation of Morawetz-type inequality and the demonstration of observation inequality for a system of damped Klein-Gordon equations.}
	
	Theorem \ref{thm:main} follows by applying the time-translation invariance of \eqref{eq:kg:int} to the following observation inequality (see \cite[Section 6]{cui2024} for the treatment of s system of damped Klein-Gordon equations).
	\begin{theorem}\label{thm:main:0}Assume that Condition \ref{condition:1} holds so that constants $\Lambda_0,\Lambda_1$ and $R$ are given.  
		{Then for each sufficiently small number $\epsilon>0$}, there exists a time $T=T(\Lambda_1,\Lambda_0,R,\epsilon)$ and a constant $C=C(\Lambda_1,\Lambda_0,R,\epsilon)$ such that any global solution $u$ of \eqref{eq:kg:int} with data $\vec{u}(0)=(u_0,u_1)\in\mathcal{PS}^+$ satisfies the estimate
		\begin{equation*}
			E[\vec{u}(T)]\leq CA[\vec{u};0,T]
		\end{equation*}
		provided that
		\begin{equation*}
			E_L[\vec{u}(0)]<\epsilon.
		\end{equation*}
	\end{theorem}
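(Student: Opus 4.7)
The plan is to prove Theorem~\ref{thm:main:0} by a \emph{compactness-uniqueness} argument, in the spirit of Dehman--Lebeau--Zuazua-type observability estimates for damped equations. By \eqref{eq:E:El} it suffices to bound $E_L[\vec u(T)]$ by a multiple of $A[\vec u;0,T]$. The exterior kinetic contribution is controlled for free by the lower bound on the damping:
\begin{equation*}
\int_0^T\!\!\int_{|x|>R}|\partial_t u|^2\,dx\,dt \;\le\; \Lambda_0^{-1}A[\vec u;0,T];
\end{equation*}
the real task is to estimate the energy on $B_R$ together with the gradient and mass parts on all of $\mathbb{R}^3$, which the damping does not see directly.

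First, I would derive a \emph{Morawetz-type estimate} valid for small-data solutions. Using a multiplier such as $\partial_r u+r^{-1}u$ adapted to the massive Klein--Gordon operator, and exploiting the smallness of $E_L[\vec u(0)]<\epsilon$ to absorb the focusing cubic term via Sobolev embedding, I expect an estimate of the form
\begin{equation*}
\int_0^T\!\!\int_{\mathbb{R}^3}\frac{|\partial_t u|^2+|\nabla u|^2+u^2}{\langle x\rangle^{\sigma}}\,dx\,dt \;\lesssim\; E_L[\vec u(0)]+E_L[\vec u(T)]+A[\vec u;0,T]
\end{equation*}
for some $\sigma>0$. This weighted space-time integral quantifies spatial decay and, after localization, supplies the compactness needed in the limit argument below.

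Next, I would argue by contradiction. If Theorem~\ref{thm:main:0} failed, there would exist $T_n\to\infty$ and solutions $u_n$ with $\vec u_n(0)\in\mathcal{PS}^+$, $E_L[\vec u_n(0)]<\epsilon$, and $E_L[\vec u_n(T_n)]>n\,A[\vec u_n;0,T_n]$. Set $b_n:=\sup_{t\in[0,T_n]}\sqrt{E_L[\vec u_n(t)]}\le\sqrt{2\epsilon}$, choose $t_n^\star$ attaining the supremum, and define the rescaled solution $w_n(t,x):=u_n(t+t_n^\star,x)/b_n$, which satisfies
\begin{equation*}
\partial_{tt}w_n-\Delta w_n+\alpha(x)\partial_t w_n+w_n=b_n^2 w_n^3
\end{equation*}
with $E_L[\vec w_n(0)]=1$, $E_L[\vec w_n(t)]\le 1$ on its time domain, and total damping tending to $0$. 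After extraction, $b_n\to b_\star\in[0,\sqrt{2\epsilon}]$ and $w_n\rightharpoonup w_\infty$ on a maximal time interval, with $w_\infty$ solving the limiting equation (linear when $b_\star=0$) and $\partial_t w_\infty\equiv 0$ on $|x|>R$. Ruiz's unique continuation principle, applied to $v:=\partial_t w_\infty$, which solves a wave equation with $L^\infty_{t,x}$ potential, extends this to $v\equiv 0$ everywhere. Hence $w_\infty$ is stationary and solves $-\Delta w_\infty+w_\infty=b_\star^2 w_\infty^3$; rescaling $W:=b_\star w_\infty$ yields a solution of \eqref{eq:kg:sta} with $\|W\|_{H^1}\le 2\sqrt\epsilon$, which for $\epsilon$ below a threshold set by the ground state $Q$ forces $W\equiv 0$ and thus $w_\infty\equiv 0$. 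Combining Morawetz, exterior damping, and Rellich compactness upgrades the weak convergence at $t=0$ to strong convergence in $H^1\times L^2$, contradicting $E_L[\vec w_n(0)]=1$.

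The principal obstacle is the \emph{upgrade from weak to strong convergence}, required both to pass to the limit in the cubic nonlinearity and to close the contradiction quantitatively. Tails at $|x|\to\infty$ are controlled by the Morawetz inequality together with the exterior damping, while local strong convergence will combine local smoothing (or Strichartz) estimates with Rellich's compactness theorem. A further delicate point is verifying Ruiz's unique continuation principle in our setting: the potential $3b_\star^2 w_\infty^2$ in the equation for $v$ must lie in $L^\infty_{t,x}$, which is expected to follow from Sobolev embedding combined with a bootstrap off the equation satisfied by $w_\infty$.
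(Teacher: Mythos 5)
Your overall strategy -- a compactness--uniqueness (Dehman--Lebeau--Zuazua-type) contradiction argument combined with a small-data Morawetz estimate and Ruiz's unique continuation principle -- is the same as the paper's, and you correctly identify all the major ingredients. However, your normalization choice creates a genuine gap in the compactness step that the paper's normalization sidesteps entirely.

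You normalize by $b_n:=\sup_{t}\sqrt{E_L[\vec u_n(t)]}$, which gives $E_L[\vec w_n(0)]=1$, and then you need to upgrade weak convergence of $\vec w_n(0)$ in $H^1\times L^2$ to \emph{strong} convergence at a fixed time in order to contradict $E_L[\vec w_n(0)]=1$ when $w_\infty\equiv 0$. You explicitly flag this upgrade as "the principal obstacle," and rightly so: for a nonlinear problem on $\mathbb{R}^3$, mass can escape to spatial infinity or spread over time, and the Morawetz estimate by itself does not close this. The paper avoids this entirely. It first proves the \emph{weak} observability inequality (Proposition~\ref{prop:obs:weak}), which controls $E[\vec u(T)]$ by $A[\vec u;0,T]$ plus the lower-order term $\int_0^T\|u\|^2_{L^2(|\cdot|\le 4R)}dt$, and then in the contradiction argument normalizes by
\begin{equation*}
\lambda_n:=\Bigl(\int_0^T\|u_n\|^2_{L^2(|\cdot|\le 4R)}\,dt\Bigr)^{1/2},
\end{equation*}
so that $\int_0^T\|w_n\|^2_{L^2(|\cdot|\le 4R)}\,dt=1$. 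The quantity normalized to $1$ is a space-time $L^2$ norm on the \emph{bounded} cylinder $(0,T)\times B_{4R}$, and the compact embedding $H^1((0,T)\times B_{4R})\hookrightarrow L^2((0,T)\times B_{4R})$ (Rellich, applied after establishing a uniform $H^1$ bound on $\{w_n\}$ via Proposition~\ref{prop:obs:weak}) upgrades weak convergence to strong convergence of precisely this quantity for free. Once $w\equiv 0$ is shown, the contradiction $1=0$ is immediate. Your normalization, in contrast, requires either a microlocal defect measure / equirepartition argument or a concentration-compactness decomposition to rule out loss of compactness at $t=0$, none of which you supply; as written the argument does not close.

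Two further points. First, you send $T_n\to\infty$, which is unnecessary and complicates the limit: the paper fixes a single $T>T_0$ large enough for Ruiz's theorem (namely $T>\mathrm{diam}(B_{R_1})$) and works on the fixed cylinder $(0,T)\times\mathbb{R}^3$. Second, you ask for the potential $3b_\star^2 w_\infty^2$ to lie in $L^\infty_{t,x}$ and anticipate needing a bootstrap off the equation; but Ruiz's theorem (Theorem~\ref{thm:ruiz}) only requires $V\in L^\infty_t L^3_x$, and this follows immediately from $w\in L^\infty_t H^1_x$ via the Sobolev embedding $H^1(\mathbb{R}^3)\hookrightarrow L^6(\mathbb{R}^3)$, so $w^2\in L^\infty_t L^3_x$. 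No bootstrap is needed, and insisting on $L^\infty_{t,x}$ is demanding strictly more regularity than is available or required.
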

	
	The key ingredients in proving the observation inequality are Ruiz's unique continuation principle (see \cite{ruiz92}) and the following weak version of observation inequality (see \cite[Section 6]{cui2024} for a concrete treatment for a system of damped Klein-Gordon equations).
	
	\begin{proposition}\label{prop:obs:weak}
		Under the same assumption in Theorem \ref{thm:main:0}, there exists a positive number $\epsilon_0$ satisfying the following property. For each $\epsilon\in(0,\epsilon_0)$, there exist a time $T_0=T_0(\Lambda_1,\Lambda_0,R,\epsilon_0)$ and a constant $C_0=C_0(\Lambda_1,\Lambda_0,R,\epsilon_0)$ such that any global solution $u$ of \eqref{eq:kg:int} with $\vec{u}(0)=(u_0,u_1)\in\mathcal{PS}^+$ satisfies {for each time $T\geq T_0$}
		\begin{equation}\label{eq:obs:week}
			E[\vec{u}(T)]\leq C_0\left[A[\vec{u};0,T]+\int_0^{T}\left\|u\right\|^2_{L^2(|\cdot|\leq 4R)}dt\right]
		\end{equation}
		provided that 
		\begin{equation*}
			E_L[\vec{u}(0)]<\epsilon.
		\end{equation*}
	\end{proposition}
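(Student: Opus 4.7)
The plan is to reduce the weak observation inequality to an integrated energy estimate. Writing $L := \int_0^T \|u\|^2_{L^2(|x|\leq 4R)}\, dt$ for brevity, I aim to establish
\begin{equation*}
\int_0^T E[\vec{u}(s)]\, ds \leq C_1\big(A[\vec{u};0,T] + L + E[\vec{u}(0)]\big)
\end{equation*}
with a constant $C_1$ depending only on $\Lambda_0, \Lambda_1, R$ and $\epsilon_0$, and in particular independent of $T$. Given this, the monotonicity of $E$ along the flow (from \eqref{eq:id}) yields $T\cdot E[\vec{u}(T)] \leq \int_0^T E[\vec{u}(s)]\, ds$; combining with the energy identity $E[\vec{u}(0)] = E[\vec{u}(T)] + A[\vec{u};0,T]$ and rearranging gives $(T-C_1)E[\vec{u}(T)] \leq 2C_1(A[\vec{u};0,T]+L)$, so that $E[\vec{u}(T)] \leq C_0(A[\vec{u};0,T]+L)$ for all $T \geq T_0$ with $T_0 > C_1$ suitably chosen.

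The starting point for the integrated estimate is the multiplier identity obtained by pairing \eqref{eq:kg:int} with $u$ and integrating over $[0,T]\times\mathbb{R}^3$:
\begin{equation*}
\int_0^T K[u(t)]\, dt = \int_0^T \|\partial_t u(t)\|^2_{L^2}\, dt - \Big[\int_{\mathbb{R}^3} u\,\partial_t u\, dx\Big]_0^T - \int_0^T\int_{\mathbb{R}^3} \alpha\, u\, \partial_t u\, dx\, dt.
\end{equation*}
Since $K[u] \geq 0$ on $\mathcal{PS}^+$ and Sobolev embedding combined with the smallness $E_L(0)<\epsilon$ gives $\|u\|^4_{L^4} \leq C\epsilon\|u\|^2_{H^1}$, this identity together with the comparison $E \leq E_L \leq 2E$ implies, after estimating the endpoint term by $2E_L(0)$ and the damping cross-term via Cauchy--Schwarz against $A^{1/2}$ with AM--GM absorption, that
\begin{equation*}
\int_0^T E_L[\vec{u}(t)]\, dt \leq C_2 \int_0^T \|\partial_t u\|^2_{L^2}\, dt + C_2 E[\vec{u}(0)].
\end{equation*}

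The damping assumption provides $\int_0^T\int_{|x|>R} |\partial_t u|^2\, dx\, dt \leq \Lambda_0^{-1} A[\vec{u};0,T]$ for free, so the remaining and principal task is to control the \emph{interior} contribution $\int_0^T\int_{|x|\leq R} |\partial_t u|^2\, dx\, dt$ by $C(A+L+E(0))$. My plan here is to take a smooth cutoff $\chi$ with $\chi \equiv 1$ on $B_R$ and $\mathrm{supp}(\chi)\subset B_{4R}$, and to combine two identities. First, pairing \eqref{eq:kg:int} with $\chi u$ yields an equipartition-type identity expressing $\int_0^T\int \chi|\partial_t u|^2$ in terms of $\int_0^T\int \chi(|\nabla u|^2 + u^2 - u^4)$ plus errors involving $\Delta\chi$ and $\nabla\chi$ supported in the annular shell inside $B_{4R}$, which are controllable by $L$ and boundary energy. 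Second, a Morawetz-type radial multiplier of the form $\psi(|x|)(\partial_r u + u/|x|)$, with $\psi$ chosen to be positive on a bounded region, produces a coercive lower bound on the localized spatial quantity $\int_0^T\int \chi(|\nabla u|^2 + u^2)$ modulo $A$, $L$ and endpoint contributions. Combining the two identities closes the bound on the full localized energy in $B_R$.

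The main obstacle is precisely this last step: constructing a combination of multipliers whose resulting quadratic form is coercive in $(|\partial_t u|^2, |\nabla u|^2, u^2)$ on $B_R$, while all error terms fit inside $A$, $L$, or endpoint data. The focusing nonlinearity $u^3$ is handled by the smallness assumption through $\int u^4 \leq C\epsilon \int(|\nabla u|^2 + u^2)$, permitting the quartic term to be absorbed into the coercive spatial quadratic form. The presence of the $L^2$-term on $B_{4R}$ on the right-hand side is intrinsic to this argument and is exactly what distinguishes this \emph{weak} observation inequality from the stronger Theorem~\ref{thm:main:0}, the passage to which is effected via Ruiz's unique continuation principle.
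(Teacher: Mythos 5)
Your overall reduction is sound and parallels the paper's strategy, but you leave the hardest step as an acknowledged gap. The paper proves this proposition quickly by quoting the Morawetz-type estimates \eqref{eq:u4:1}--\eqref{eq:u:nabla:2} of Lemma~\ref{lem:morawetz} as a black box: adding \eqref{eq:u:nabla:1} to a large multiple of \eqref{eq:u:nabla:2}, adding the $L^2(B_{2R})$ term to both sides, and absorbing the $\epsilon$-weighted annular overlap into the left-hand side gives $\int_0^T\int(|\nabla u|^2+u^2)\,dx\,dt\leq C\big(A[\vec{u};0,T]+E[\vec{u}(T)]+\int_0^T\|u\|^2_{L^2(|\cdot|\leq 4R)}\,dt\big)$, after which the monotonicity of $E$, the energy identity \eqref{eq:id}, and a choice of $T_0$ close the argument exactly as in your first paragraph. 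Your route differs in one respect: you first run the multiplier $u$ over all of space to trade $\int_0^T\int(|\nabla u|^2+u^2)$ for $\int_0^T\int|\partial_t u|^2$ (using $K[u(t)]\geq 0$, which needs the invariance of $\mathcal{PS}^+$ from Proposition~\ref{thm:1}, together with Gagliardo--Nirenberg smallness to handle the $u^4$ and $\alpha u\partial_t u$ terms), then split the $\partial_t u$ integral into an exterior piece (controlled trivially by $\Lambda_0^{-1}A$) and an interior piece, and only then bring in cutoff and Morawetz multipliers. This is a legitimate and arguably cleaner decomposition, and your bookkeeping for the endpoint and cross terms, as well as the passage from the integrated estimate to $E[\vec{u}(T)]\leq C_0(A+L)$ via monotonicity, is correct.

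The genuine gap is exactly what you flag at the end: the coercivity of the combined $\chi u$ and radial $\psi(|x|)(\partial_r u+u/|x|)$ multiplier identities in the interior region, while all error terms fit inside $A$, $L$, or endpoint data. This is not a detail to be filled in later --- it is the substance of Lemma~\ref{lem:morawetz}, whose proof occupies the whole of Section~\ref{sec:morawetz}. The points you would need to make precise are (i) the choice of cutoff weights ($\phi=\varphi^4$, $\psi=\kappa^4$ in the paper) so that $|\nabla\phi|\lesssim\phi^{7/8}$, which is what lets the $u^4$ contribution from $\nabla\phi\cdot x$ be absorbed via Gagliardo--Nirenberg and the uniform smallness $E_L[\vec{u}(t)]\leq 2E[\vec{u}(0)]<2\epsilon$; (ii) the verification that the overlap $|\nabla u|^2$ terms between the interior and exterior regional estimates enter with an $\epsilon$ prefactor so that they can be absorbed after combining; and (iii) the sign structure of the principal term in the $x\cdot\nabla u+u$ multiplier that actually yields a coercive lower bound after the spatial-divergence term drops out. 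Without writing out these computations the proposal does not constitute a proof; ``constructing a combination of multipliers whose resulting quadratic form is coercive'' is the lemma, not a step of its proof.
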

	
	The proof of this weak version of observation inequality is a consequence of variants of Morawetz estimate for \eqref{eq:kg:int} with small data.
	\begin{lemma}\label{lem:morawetz}
		Under the same condition with Proposition \ref{prop:obs:weak}, there exists a positive number $\epsilon_2$ such that for each $\epsilon\in(0,\epsilon_2)$, the following property holds. One can finds a constant $C_2=C_2(\Lambda_1,\Lambda_0,R,\epsilon_2)$ so that any global solution $u$ with data $\vec{u}(0)=(u_0,u_1)\in\mathcal{PS}^+$ satisfies for each time $T>0$
		\begin{equation}\label{eq:u4:1}
			\int_0^T\int_{|\cdot|\leq 2R}u^4dxdt\leq C_2\epsilon \left[\int_0^T\int_{|\cdot|\leq 4R} u^2dxdt + \int_0^T\int_{2R\leq |\cdot|\leq 4R}|\nabla u|^2dxdt \right]
		\end{equation}
		
		\begin{equation}\label{eq:u:nabla:1}
			\int_0^T\int_{|\cdot|\leq 2R}|\nabla u|^2dxdt\leq C_2\left[A[\vec{u};0,T]+E[\vec{u}(T)] + \int_0^T\int_{|\cdot|\leq 4R} u^2dxdt + \int_0^T\int_{2R\leq |\cdot|\leq 4R}|\nabla u|^2dxdt \right]
		\end{equation}
		
		\begin{equation}\label{eq:u4:2}
			\int_0^T\int_{|\cdot|\geq 2R}u^4dxdt\leq C_2\epsilon \left[\int_0^T\int_{|\cdot|\leq 2R} u^2dxdt + \int_0^T\int_{2R\leq |\cdot|}|\nabla u|^2dxdt \right]
		\end{equation}
		
		\begin{equation}\label{eq:u:nabla:2}
			\begin{split}
				\int_0^T\int_{|\cdot|\geq 2R}\left(|\nabla u|^2+u^2\right)dxdt &\leq C_2\left[A[\vec{u};0,T]+E[\vec{u}(T)] + \int_0^T\int_{|\cdot|\leq 2R} u^2dxdt \right]\\
				&\ \ \ \ \ \ \ \ \ \ \ \ \ \ + C_2\epsilon \int_0^T\int_{R\leq |\cdot|\leq 2R}|\nabla u|^2dxdt 
			\end{split}
		\end{equation}
		provided that
		\begin{equation*}
			E_L[\vec{u}(0)]<\epsilon.
		\end{equation*}
	\end{lemma}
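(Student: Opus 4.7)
The plan is to derive all four inequalities from localized multiplier identities applied to \eqref{eq:kg:int}, combining a scalar multiplier of the form $\chi u$ with an auxiliary radial Morawetz-type multiplier, and using Gagliardo--Nirenberg interpolation to pull the small factor $\epsilon$ out of the focusing nonlinearity. The uniform bound $\|u(t)\|_{H^1}^2+\|\partial_t u(t)\|_{L^2}^2\lesssim \epsilon$, a consequence of $E\leq E_L$, the monotonicity $E[\vec{u}(t)]\leq E[\vec{u}(0)]\leq E_L[\vec{u}(0)]<\epsilon$, and \eqref{eq:E:El}, is used freely throughout.

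For the interior bounds \eqref{eq:u4:1}--\eqref{eq:u:nabla:1}, I would fix a smooth cutoff $\chi$ with $\chi\equiv 1$ on $\{|x|\leq 2R\}$ and $\operatorname{supp}\chi\subset\{|x|\leq 4R\}$, multiply \eqref{eq:kg:int} by $\chi u$, and integrate by parts in $(t,x)$. This yields
\begin{equation*}
\int_0^T\!\!\int\chi(|\nabla u|^2+u^2-u^4)\,dx\,dt = \int_0^T\!\!\int\chi(\partial_tu)^2\,dx\,dt + \tfrac{1}{2}\int_0^T\!\!\int u^2\Delta\chi\,dx\,dt - \Bigl[\int\bigl(\chi u\,\partial_tu + \tfrac{1}{2}\alpha\chi u^2\bigr)\,dx\Bigr]_0^T.
\end{equation*}
The time-endpoint terms are bounded by $E_L[\vec{u}(0)]+E_L[\vec{u}(T)]\lesssim \epsilon+E[\vec{u}(T)]$; the $\Delta\chi$ contribution is supported in the annulus $\{2R\leq|x|\leq 4R\}$; and the $(\partial_tu)^2$ integrand is split at $|x|=R$, so that on $\{R\leq|x|\leq 4R\}$ the damping hypothesis $\alpha\geq\Lambda_0$ absorbs it into $A[\vec{u};0,T]$. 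On $\{|x|\leq R\}$, where $\alpha$ may vanish, I would run an auxiliary Morawetz identity obtained by testing against $(x\cdot\nabla u+\tfrac{3}{2}u)\eta$ with $\eta\equiv 1$ on $B_R$ and $\operatorname{supp}\eta\subset B_{2R}$, converting $\int(\partial_tu)^2$ on $B_R$ into a flux landing in $|\nabla u|^2$ on the annulus $\{R\leq|x|\leq 2R\}$. The residual $\chi u^4$ contribution is then absorbed via Gagliardo--Nirenberg $\|\chi u\|_{L^4(\mathbb{R}^3)}^4\lesssim\|\chi u\|_{L^2}\|\nabla(\chi u)\|_{L^2}^3$ combined with $\|\nabla u\|_{L^2}^2\lesssim\epsilon$; this produces the $\epsilon$-small prefactor that yields \eqref{eq:u4:1} and closes \eqref{eq:u:nabla:1}.

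The exterior bounds \eqref{eq:u4:2}--\eqref{eq:u:nabla:2} are obtained by the mirror construction with a complementary cutoff $\widetilde{\chi}\equiv 1$ on $\{|x|\geq 2R\}$ and $\operatorname{supp}\widetilde{\chi}\subset\{|x|\geq R\}$, where now the damping $\alpha\geq\Lambda_0$ is active on the entire support of $\widetilde{\chi}$ and $(\partial_tu)^2$ is absorbed directly into $A[\vec{u};0,T]$. Cutoff derivatives $\nabla\widetilde{\chi},\Delta\widetilde{\chi}$ land in the annulus $\{R\leq|x|\leq 2R\}$ and generate the $\epsilon$-weighted annular $|\nabla u|^2$ term appearing in \eqref{eq:u:nabla:2}; a radial Morawetz multiplier of the form $\widetilde{\chi}(\partial_r u+u/r)$ produces coercive $|\nabla u|^2+u^2$ integrands with weights that remain integrable at infinity; and Gagliardo--Nirenberg again pulls out $\epsilon$ in \eqref{eq:u4:2}.

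The main obstacle I anticipate is precisely the region $\{|x|\leq R\}$ where damping may vanish: there is no direct way to absorb $\int_0^T\!\int_{|x|\leq R}(\partial_tu)^2$ into $A[\vec{u};0,T]$, and the interior auxiliary Morawetz identity must be engineered so that every residual term lands in one of the admissible slots on the RHS of \eqref{eq:u:nabla:1}--\eqref{eq:u:nabla:2} (annular $|\nabla u|^2$, $L^2(B_{4R})$ mass, $A[\vec{u};0,T]$, or $E[\vec{u}(T)]$). Coordinating the $\epsilon$ prefactors so the four estimates close without circular dependence, and correctly handling the sign of the Morawetz vector field at $|x|=R$, is the technical point that requires the most care.
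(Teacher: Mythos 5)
Your overall strategy (localized multiplier identities plus Gagliardo--Nirenberg to extract the $\epsilon$-small prefactor from the cubic term, with damping absorbing $(\partial_t u)^2$ where $\alpha\geq\Lambda_0$) is the same as the paper's, and your exterior treatment with the scalar cutoff multiplier $\widetilde\chi u$ matches the paper's $\psi u$ computation essentially line by line (the extra radial Morawetz multiplier you mention is superfluous there). The gap is in the interior case, and it is a real one: the auxiliary multiplier you propose, $\eta\,(x\cdot\nabla u+\tfrac{3}{2}u)$, does not do what you claim. Computing the bulk time contribution from testing $u_{tt}$ against $\eta(x\cdot\nabla u+cu)$ gives a coefficient $\tfrac{d}{2}-c$ in front of $\iint\eta(\partial_t u)^2$, which for $d=3$ and $c=\tfrac{3}{2}$ is exactly zero. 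So the identity contains no bulk $(\partial_t u)^2$ term on $B_R$ at all, and it cannot be used to ``convert $\int_{B_R}(\partial_t u)^2$ into a flux'' --- the quantity you are trying to bound simply does not appear. Carried out as stated, step~2 of your interior argument is vacuous, and the unabsorbed $\int_0^T\int_{B_R}(\partial_t u)^2$ left over from step~1 has nowhere to go.

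The paper avoids this entirely by running a \emph{single} multiplier $\phi\,(x\cdot\nabla u+u)$ with the Morawetz constant $c=\tfrac{d-1}{2}=1$. With that choice, the bulk time term has coefficient $\tfrac{d}{2}-c=\tfrac{1}{2}$, so $\tfrac{\phi}{2}(\partial_t u)^2$ appears together with $\tfrac{\phi}{2}|\nabla u|^2$ on the coercive side (this is the paper's term $\mathrm{VI}$), and the problematic $\int_{B_R}(\partial_t u)^2$ never needs separate treatment. The ``scalar'' part $\phi u$ is already built into that multiplier, so the preliminary $\chi u$ step you propose is redundant. If you insist on a standalone Morawetz step, you should take $c=1$, not $c=\tfrac{3}{2}$, and then you will find the $\chi u$ identity is not needed. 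Two further minor points to watch: (i) the cutoff must be chosen so that the annulus where $\nabla\eta\neq 0$ lies inside the region $\{R\leq|x|\}$ where $\alpha\geq\Lambda_0$, in order to absorb the annular $(\partial_t u)^2$ into $A[\vec u;0,T]$ and to land the annular $|\nabla u|^2$ in an admissible slot of \eqref{eq:u:nabla:1}; and (ii) since the focusing $u^4$ term produced by the multiplier is estimated by Gagliardo--Nirenberg and reabsorbed, the small-$\epsilon$ factor must be tracked carefully so that the $\epsilon|\nabla u|^2$ contribution on $B_{2R}$ can be kicked back to the left-hand side, which is exactly how the paper fixes $\epsilon_2$.
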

	
	With this result at hand, we now give
\begin{proof}[Proof of Proposition \ref{prop:obs:weak}]
	Let $\epsilon_2$ and $C_2$ be as in Lemma \ref{lem:morawetz}. Take a small number $\epsilon_0\in(0,\epsilon_2)$ that is to be fixed later on. Fix $\epsilon\in(0,\epsilon_0)$. Then any global solution $u$ of \eqref{eq:kg:int} with $\vec{u}(0)=(u_0,u_1)\in\mathcal{PS}^+$ satisfies estimates \eqref{eq:u:nabla:1} and \eqref{eq:u:nabla:2} with constant being $C_2$ for each $T>0$ provided that
	\begin{equation}\label{eq:data:assumption:1}
		E_L[\vec{u}(0)]<\epsilon<\epsilon_0.
	\end{equation}

	We add the inequality \eqref{eq:u:nabla:1} to $(C_2+1)$-multiple of \eqref{eq:u:nabla:2} and add $\int_0^T\int_{|\cdot|\leq 2R}u^2dxdt$ onto both sides of the resulted inequality, obtaining
	\begin{equation}\label{eq:energy:0}
		\begin{split}
			\int_0^T\int_{\mathbb{R}^3}(|\nabla u|^2+u^2)dxdt&\leq C_3\left[A[\vec{u};0,T]+E[\vec{u}(T)]\right]+  C_4\int_0^T\int_{|\cdot|\leq 4R} u^2dxdt\\
			&\ \ \ \ \ \ \ \ \ \ \ \ \ \ + C_2(C_2+1)\epsilon \int_0^T\int_{R\leq |\cdot|\leq 2R}|\nabla u|^2dxdt 
		\end{split}
	\end{equation}
	with $C_3:=C_2(C_2+2)$ and $C_4:=[C_2(C_2+2)+1]$. Taking $\epsilon_0$ to be small so that 
	\begin{equation*}
		C_2(C_2+1)\epsilon_0\leq \frac{1}{2},\ \mathrm{and\ hence}\ C_2(C_2+1)\epsilon\leq \frac{1}{2}
	\end{equation*}
	we can absorb the last term on the right hand side of \eqref{eq:energy:0} into its left hand side, obtaining
	\begin{equation*}
		\int_0^T\int_{\mathbb{R}^3}E_L[\vec{u}]dxdt\leq 2C_3\left[A[\vec{u};0,T]+E[\vec{u}(T)]\right]+  2C_4\int_0^T\int_{|\cdot|\leq 4R} u^2dxdt.
	\end{equation*}
	Since $E_L[\vec{u}(t)]\geq E[\vec{u}(t)]$ for each $t$ and the functional $E[\vec{u}(t)]$ is non-increasing in time, this last estimate implies				 
	\begin{equation*}
		TE[\vec{u}(T)]\leq 2C_3\left[A[\vec{u};0,T]+E[\vec{u}(T)]\right]+  2C_4\int_0^T\int_{|\cdot|\leq 4R} u^2dxdt.
	\end{equation*}
	Taking $T_0:=2C_3+1$ and kicking the term $2C_3E[\vec{u}(T)]$ back to the left hand side, we obtain the asserted estimate with $C_0:=2C_4$. This completes the proof of Proposition \ref{prop:obs:weak}.
\end{proof}

	For reader's convenience, we quote here Ruiz's unique continuation principle (u.c.p., for abbreviation) in \cite{ruiz92} with the underlying domain being balls.
	
\begin{theorem}[Ruiz' u.c.p.]\label{thm:ruiz}
	Fix a positive number $r>0$ and let $S\subset B_r$ be a non-empty open neighborhood of the boundary $\partial B_r$. For each time $T>\mathrm{diam}(B_r)=2r$, let $u$ be a week $L^2((0,T)\times B_r)$ solution of 
	\begin{equation*}
		(\partial^2_{tt}-\Delta )w+V(t,x)w=0\ \ \mathrm{in}\ (0,T)\times B_r
	\end{equation*}
	with $V(t,x)\in L^\infty((0,T),L^3(B_r))$. If in addition $w\equiv0$ on $(0,T)\times S$, then $w\equiv0$ on $(0,T)\times B_r$.
\end{theorem}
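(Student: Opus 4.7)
The plan is to establish Ruiz's u.c.p.\ via a Carleman estimate for the wave operator $\Box := \partial_{tt}^2 - \Delta$ with a pseudo-convex weight. I would choose a weight of the form $\phi(t,x) = |x-x_0|^2 - \theta(t-t_0)^2$ with $\theta\in(0,1)$ and a centre $(t_0,x_0)$ placed just outside $B_r$; such a $\phi$ is strongly pseudo-convex for $\Box$, and its level sets $\{\phi=c\}$ are hyperboloids that sweep inward from $\partial B_r$ as $c$ decreases. The temporal hypothesis $T>2r$ is exactly what permits these hyperboloids, after translating $(t_0,x_0)$ along the $t$-axis, to cover every point of $(0,T)\times B_r$.

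Next I would establish the associated Carleman estimate
\begin{equation*}
\tau^3 \int e^{2\tau\phi}|w|^2\,dx\,dt + \tau \int e^{2\tau\phi}|\nabla_{t,x} w|^2\,dx\,dt \leq C \int e^{2\tau\phi} |\Box w|^2\,dx\,dt,
\end{equation*}
valid for all $w\in C^\infty_c((0,T)\times B_r)$ and $\tau\geq\tau_0$. Substituting $\Box w = -Vw$ yields $C\int e^{2\tau\phi}|Vw|^2$ on the right. Using H\"older in $x$ together with the three-dimensional Sobolev embedding $H^1(B_r)\hookrightarrow L^6(B_r)$,
\begin{equation*}
\int_0^T\!\!\int_{B_r} e^{2\tau\phi}|Vw|^2\,dx\,dt \leq \|V\|_{L^\infty_t L^3_x}^2\int_0^T\|e^{\tau\phi}w\|_{L^6_x}^2\,dt \leq C\int e^{2\tau\phi}\bigl(|\nabla_x w|^2 + \tau^2|w|^2\bigr)dx\,dt,
\end{equation*}
and the crucial point is that for $\tau$ sufficiently large the right side of this inequality can be absorbed by the left side of the Carleman estimate.

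To close the argument I would introduce a smooth cutoff $\chi$ equal to $1$ on the region where I want to force $w\equiv 0$, supported in $B_r$, and arranged so that $\mathrm{supp}(\nabla\chi)\subset S$. Then $\Box(\chi w) + V(\chi w) = [\Box,\chi]w$, and the commutator is a first-order differential operator supported on $S$, so its action on $w$ vanishes by the standing hypothesis. Applying the Carleman estimate to $\chi w$ and letting $\tau\to\infty$, the weight $e^{2\tau\phi}$ blows up on $\{\phi>\phi_{\max}(\mathrm{supp}\,\nabla\chi)\}$, which forces $w\equiv 0$ there. A standard iteration that translates the weight centre in $t$ and lets $\theta$ approach $1$ from below sweeps every point of $(0,T)\times B_r$ into such a region.

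I expect the main obstacle to be the Carleman estimate in the presence of the $L^3_x$ potential: in three dimensions the Sobolev exponent $6$ is dual to $6/5$, and $L^3$ sits exactly at the scaling threshold where $Vw$ and $\nabla w$ compete on equal footing, so the $\tau$-dependence on both sides must be tracked very sharply to guarantee absorption. This is precisely the refinement Ruiz's paper accomplishes beyond the classical Carleman u.c.p.\ for $L^\infty$ potentials. A secondary technical issue is extending the estimate from smooth compactly supported $w$ to the weak $L^2$ solutions asserted in the statement, which I would handle by a routine mollification and limiting argument once the a priori estimate is in hand.
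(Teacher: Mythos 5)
The paper does not prove Theorem~\ref{thm:ruiz} --- the sentence immediately preceding the statement reads ``we quote here Ruiz's unique continuation principle (u.c.p.\, for abbreviation) in \cite{ruiz92}'', so this theorem is imported as a black box and there is no in-paper argument to compare against. For the purposes of this paper the correct move is simply to cite \cite{ruiz92}, which is what the authors do.

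As a sketch of how Ruiz's theorem is actually established, your outline captures the right skeleton (Carleman estimate with a hyperbolically pseudoconvex weight $|x-x_0|^2-\theta(t-t_0)^2$, cutoff, sweeping as $\theta\to 1^-$, and the geometric role of $T>\mathrm{diam}(B_r)$), and you are honest that the Carleman inequality itself carries the whole weight of the argument. But two points in your plan would not survive closer scrutiny as written. First, the absorption via $\|Vw\|_{L^2}\le\|V\|_{L^3}\|w\|_{L^6}$ and $H^1\hookrightarrow L^6$ requires putting $\|e^{\tau\phi}\nabla w\|_{L^2}$ on the right and absorbing it into $\tau\|e^{\tau\phi}\nabla w\|_{L^2}^2$ on the left; but the constant $C_0=C_0(\theta)$ in your $L^2$ Carleman inequality blows up precisely as $\theta\to 1^-$, which is the limit you must take to make the hyperboloids sweep the full cylinder $(0,T)\times B_r$, so the absorption and the sweep pull in opposite directions and must be balanced uniformly --- this is the essential difficulty you only gesture at. Second, and more fundamentally, the hypothesis makes $w$ merely a weak $L^2((0,T)\times B_r)$ solution. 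Your argument needs $w\in H^1$ both to run the Sobolev step and to make sense of the commutator term $[\Box,\chi]w$; for an $L^2$ solution neither is available, and the regularization you describe as ``routine'' is exactly where it is not. Ruiz's proof circumvents both issues by working with $L^p$--$L^q$ Carleman estimates (in the spirit of Kenig--Ruiz--Sogge and Jerison), which bound $\|e^{\tau\phi}w\|_{L^{q'}}$ directly by $\|e^{\tau\phi}\Box w\|_{L^{q}}$ without placing any derivative of $w$ on the left; that is what lets the argument close at the scale-critical potential class and for genuinely $L^2$ solutions. So your proposal is a reasonable plan, not a proof, and the piece that is missing is precisely the piece that constitutes Ruiz's contribution.
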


	We end this introduction part by describing the organization of this paper: in Section \ref{sec:morawetz}, we will prove Lemma \ref{lem:morawetz}, and in Section \ref{sec:main}, we will use Ruiz's u.c.p. and the weak version of observation inequality to prove the observation inequality.

\section{Proof of Lemma \ref{lem:morawetz}}\label{sec:morawetz}
	In this section, we give the proof of Lemma \ref{lem:morawetz}. In the defocusing case, that is, both the kinetic energy and the potential energy are of positive signs, the proof turns out to be simple, see \cite{ain11}. However, since our equation \eqref{eq:kg:int} is focusing, we can not apply the argument in \cite{ain11} to achieve these estimates. Nevertheless, we can handle this difficulty by exploring the smallness of linear energy.
	
	\begin{proof}[Proof of Lemma \ref{lem:morawetz}]
		We now show the first two inequalities. Let $r_1:=3R/2$ and $r_2:=5R/2$. Denote $S_{T,r}:=(0,T)\times B_r$ for each $r>0$ and each time $T>0$. Take a non-negative test function $\varphi\in C^2_c(B_{r_2},[0,1])$ that satisfies $\varphi\big|_{B_{r_1}}\equiv 1$ and $\sup_{B_{r_2}}|\nabla\varphi|\leq \gamma \varphi^{\frac{1}{2}} $ for some constant $\gamma=\gamma(R)$. Put $\phi=\varphi^4$. Then we have
		\begin{equation}\label{eq:constant}
			\max\big(\sup\left|x\phi(x)\right|,\sup(|x|\cdot|\nabla\phi|,|\nabla \varphi|,|\nabla \varphi^{\frac{7}{8}}|)\big)\leq \tilde{C}_1:=\tilde{C}_1(R),\ \sup_{B_{r_2}}|\nabla  \phi|\leq 4\gamma \phi^{\frac{7}{8}}.
		\end{equation} 
		For convenience, we here take $\tilde{C}_1$ to be larger so that it is bigger than $1$.
		
		Take $\epsilon_2<\epsilon_3$ to be two small positive numbers that are to be specified later on, and fix a number $\epsilon\in(0,\epsilon_2)$. Let $u(t)$ be a global solution of \eqref{eq:kg:int} with initial data $\vec{u}(0)=(u_0,u_1)\in\mathcal{PS}^+$ satisfying
		\begin{equation}\label{eq:data:assumption}
			E_L[\vec{u}(0)]<\epsilon<\epsilon_2<\epsilon_3.
		\end{equation}
		Integrating the product of the equation satisfied of $u$ and $\left(\phi x\cdot\nabla u+\phi u\right)$ over $S_{T,r_2}$ and using integration by parts, we obtain the identity 
		\begin{equation}\label{eq:id:1}
			\begin{split}
				0
				=&\underbrace{\iint_{S_{T,r_2}}\partial_{t}[\partial_{t}u(\phi x\cdot \nabla u+\phi u)]}_{=:\mathrm{I}} +\underbrace{ \iint_{S_{T,r_2}}\alpha \partial_t u (\phi x\cdot \nabla u+\phi u)}_{=:\mathrm{II}} \\
				&\underbrace{-\iint_{S_{T,r_2}}\nabla\cdot\left(\nabla u (\phi x\cdot \nabla u+\phi u)-\frac{\phi x}{2}\left(|\nabla u|^2+u^2-|\partial_tu|^2\right)\right) -\iint_{S_{T,r_2}}\nabla\cdot \frac{\phi xu^4}{4}}_{=:\mathrm{III}}  \\
				& +\underbrace{\iint_{S_{T,r_2}}(\nabla u\cdot  x+u)(\nabla u\cdot \nabla \phi)-\iint_{S_{T,r_2}}\frac{\nabla\phi\cdot x}{2}\left(|\nabla u|^2-|\partial_tu|^2+u^2\right)}_{=:\mathrm{IV}}\\
				&\underbrace{-\iint_{S_{T,r_2}}\left(\frac{\phi}{4}-\nabla \phi \cdot x\right) u^4 }_{=:\mathrm{V}}+\underbrace{\iint_{S_{T,r_2}}\frac{\phi}{2}\left(|\nabla u|^2+|\partial_tu|^2-u^2\right)}_{=:\mathrm{VI}}
			\end{split}
		\end{equation}
		
		We are going to bound $\mathrm{VI}$ from below, but to bound all other terms from above.
		
		Using Cauchy-Schwarz inequality followed by another application of Young's inequality, we obtain
		\begin{equation*}
			|\mathrm{I}|=	\left|\left(\int_{B_{r_2}}[\partial_{t}u(\phi x\cdot \nabla u+\phi u)]\big|_0^T \right)\right|\leq \tilde{C}_1 (E[\vec{u}(0)]+E[\vec{u}(T)])
		\end{equation*}
		where $\tilde{C}_1=\tilde{C}_1(R)>0$ is as in \eqref{eq:constant}. { Using \eqref{eq:id}, we further bound
		\begin{equation}\label{eq:I}
				|\mathrm{I}|\leq 2\tilde{C}_1 (A[\vec{u},0,T]+E[\vec{u}(T)]).
		\end{equation}}
		
		We next bound $\mathrm{II}$. For this, we use our assumption \eqref{condition:1} on the damping and our choice of $\phi$ together with Young's inequality to get
		\begin{equation*}
			\left|\mathrm{II}\right|\leq \frac{1}{16} \int_0^T\int_{B_{r_2}} \left(|\nabla u|^2+u^2\right)+4\Lambda_1 \tilde{C}_1^2 \int_0^T\int_{B_{r_2}}\alpha |\partial_tu|^2.
		\end{equation*}
		{
			Noting that the second integration on the right hand side is indeed bounded by $A[\vec{u},0,T]$, we thus have
			\begin{equation}\label{eq:II}
				\left|\mathrm{II}\right|\leq \frac{1}{16} \int_0^T\int_{B_{r_2}} \left(|\nabla u|^2+u^2\right)+4\Lambda_1 \tilde{C}_1 A[U,0,T].
			\end{equation}
		}
		
			We now use integration by parts to treat $\mathrm{III}$, obtaining
		\begin{equation}\label{es:spaceboundary}
			\mathrm{III}=0.
		\end{equation}
		
		To treat $\mathrm{IV}$, we use the fact $\mathrm{supp}\{\nabla \phi\}\subset B_{r_2}\setminus B_{r_1}$, Cauchy-Schwarz inequality together with the defining expression of $A[\vec{u},0,T]$ and Condition \eqref{condition:1} to obtain
		\begin{equation}\label{eq:IV}
			|\mathrm{IV}|\leq \tilde{C}_2 A[\vec{u},0,T]+ \tilde{C}_2 \int_0^T\int_{B_{r_2}\setminus B_{r_1}} \left(|\nabla u|^2+|u|^2\right)dxdt
		\end{equation}
		where $\tilde{C}_2=\tilde{C}_2(R,\Lambda_0^{-1})$ is a positive constant.
		
		In order to deal with $\mathrm{V}$, we split
		\begin{equation}\label{eq:V:split}
			\mathrm{V}=\underbrace{-\iint_{S_{T,r_2}} {\frac{\phi u^4}{4}}}_{=:\mathrm{V}_1}+\underbrace{\iint_{S_{T,r_2}}(x\cdot\nabla\phi) u^4}_{=:
				\mathrm{V}_2}.
		\end{equation}
		We use the last inequality in \eqref{eq:constant} to bound $V_2$
		\begin{equation}\label{eq:V2}
			|\mathrm{V}_2|\leq 16R\gamma \iint_{S_{T,r_2}}\phi^{\frac{7}{8}} u^4.
		\end{equation}
		Recalling $\phi=\varphi^4$, we use Gagliardo-Nirenberg inequality on the whole space to bound $\mathrm{V}_2$
		\begin{equation}
			|\mathrm{V}_2|\leq 16R\gamma\int_0^T\int_{\mathbb{R}^3}(\varphi^{\frac{7}{8}} u)^4\leq 16R\gamma B^4\int_0^T\left(\int_{\mathbb{R}^3}|\nabla(\varphi^{\frac{7}{8}} u)|^2\right)\times \left(\int_{\mathbb{R}^3}|\varphi^{\frac{7}{8}} u|^2\right)
		\end{equation}
		where $B$ is the best constant for Gagliardo-Nirenberg inequality {(see \cite{DelPino2002})}. Using \eqref{eq:constant} and the smallness assumption \eqref{eq:data:assumption} and doing some elementary calculus, we obtain
		\begin{equation}\label{eq:V2}
			|\mathrm{V}_2|\leq 32R\gamma\tilde{C}_1B^4\epsilon\left(\int_0^T\int_{B_{r_2}}|u|^2+\int_0^T\int_{B_{r_2}}|\nabla u|^2\right)
		\end{equation}
		Similarly, we apply the Gagliardo-Nirenberg inequality on the entire space to estimate $\mathrm{V}_1$ 
		\begin{equation}
			|\mathrm{V}_1|\leq \int_0^T\int_{\mathbb{R}^3}(\varphi u)^4\leq B^4\int_0^T\left(\int_{\mathbb{R}^3}|\nabla(\varphi u)|^2\right)\times \left(\int_{\mathbb{R}^3}|\varphi u|^2\right).
		\end{equation}
		Again, using \eqref{eq:constant} and the smallness assumption \eqref{eq:data:assumption}, and performing some elementary calculus, we obtain
		\begin{equation}\label{eq:V1}
			|\mathrm{V}_1|\leq 2\tilde{C}_1B^4\epsilon\left(\int_0^T\int_{B_{r_2}}|u|^2+\int_0^T\int_{B_{r_2}}|\nabla u|^2\right)
		\end{equation}
		It then follows the relationship between $r_1,r_2$ and $R$ and our choice of $\phi$ that the asserted inequality \eqref{eq:u4:1} is valid with constant being
		\begin{equation}\label{eq:c2:1}
			C_{2,1}=8\tilde{C}_1B^4.
		\end{equation}
		 
		We are now in a position to use \eqref{eq:V:split} to combine \eqref{eq:V1} and \eqref{eq:V2}, obtaining
		\begin{equation}\label{eq:V}
			|\mathrm{V}|\leq 2(64R\gamma +1)\tilde{C}_1B^4\epsilon\left(\int_0^T\int_{B_{r_2}}|u|^2+\int_0^T\int_{B_{r_2}}|\nabla u|^2\right)
		\end{equation}
		
		We use the properties of $\phi$ to bound $\mathrm{VI}$ simply as
		\begin{equation}\label{es:lowbound}
			\mathrm{VI}\geq \frac{1}{2}	\int_0^T\int_{B_{r_1}} \left(|\partial_{t}u|^2+|\nabla u|^2\right)-\frac{1}{2}\int_0^T\int_{B_{r_2}} |u|^2.
		\end{equation}
	
		We now use \eqref{eq:id:1} to combine \eqref{eq:I},\eqref{eq:II},\eqref{es:spaceboundary},\eqref{eq:IV},\eqref{eq:V} and \eqref{es:lowbound}, obtaining
		\begin{equation}\label{legend}
			\begin{split}
				\frac{1}{2}\int_0^T\int_{|x|\leq r_1}|\nabla u|^2\leq &\tilde{C}_3\Big(A[\vec{u},0,T]+E[\vec{u}(T)]\Big)+ \tilde{C}_4(\epsilon_3)\int_0^T\|u\|_{L^2(|\cdot|\leq r_2)}^2dt \\
				& \ \ \ \ +\tilde{C}_{5}(\epsilon_3)\int_0^T\int_{r_1 \leq |x|\leq r_2}|\nabla u|^2 +\tilde{C}_{6}(\epsilon_3)\int_0^T\int_{|x|\leq r_1}|\nabla u|^2 
			\end{split}		
		\end{equation}
		where
		\begin{equation*}
			\begin{split}
				\tilde{C}_3&:= 2\tilde{C}_1+\tilde{C}_2+4\Lambda_1\tilde{C}_1^2,\\ \tilde{C}_4&:=\tilde{C}_4(\epsilon_3)=\frac{9}{16}+\tilde{C}_2+2(64R\gamma+1)\tilde{C}_1\epsilon_3,\\
				\tilde{C}_{5}&:=\tilde{C}_{5}(\epsilon_3)= \frac{1}{16}+\tilde{C}_2+2(64R\gamma+1)\tilde{C}_1,\\
				\tilde{C}_{6}&:=\tilde{C}_{6}(\epsilon_3)=\frac{1}{16}+2(64R\gamma+1)\tilde{C}_1\epsilon_3.
			\end{split}
		\end{equation*}		
			In order to kick the last term on the right hand side back to the left hand side, we now fix a small number $\epsilon_3$ so that $\tilde{C}_{6}\leq 1/4$, that is
		\begin{equation*}
			2(64R\gamma+1)\tilde{C}_1\epsilon_3\leq \frac{3}{16}.
		\end{equation*} 
		With this choice of $\epsilon_3$, we absorb this last term into left hand side, obtaining
		\begin{equation}\label{eq:u:nabla:a1}
			\int_0^T\int_{|\cdot|\leq r_1}|\nabla u|^2\leq C_{2,2}\left[A[\vec{u},0,T]+E[\vec{u}(T)]+ \int_0^T\|u\|_{L^2(|\cdot|\leq r_2)}^2dt+ \int_0^T\int_{r_1 \leq |\cdot|\leq r_2}|\nabla u|^2 \right]
		\end{equation}
		with
		\begin{equation}\label{eq:c2:2}
			C_{2,2}:=4\max\left(\tilde{C}_3,\tilde{C}_4(\epsilon_3),\tilde{C}_{5}(\epsilon_3)\right).
		\end{equation}
		Using the relation of $r_1,r_2$ and $R$, we see that this implies the asserted inequality \eqref{eq:u:nabla:1} with the constant $C_{2,2}$ in place of $C_2$.
		
		Up to now, we finish the proof of the first two inequalities with constant $C_2$ being the maximum of these two in \eqref{eq:c2:1} and \eqref{eq:c2:2}. We next turn to show the last two inequalities, by choosing $\epsilon_2$ even smaller if necessary.
		
		We choose a new test function $\kappa\in C^\infty(\mathbb{R}^3,[0,1])$ satisfying $\kappa|_{|\cdot|\leq R}\equiv0,\kappa|_{|\cdot|\geq 2R}\equiv 1$, {$\sup|\nabla\kappa|\leq \beta_1:=\beta_1(R)$ and $\sup|\Delta \kappa|\leq \beta_2:=\beta_2(R)$ for some positive constants $\beta_1$ and $\beta_2$.}
		
		Put $\psi:=\kappa^4$. Then {we have
		\begin{equation}\label{eq:bound:psi}
			\sup|\Delta\psi|\leq 4\beta_2+12\beta_1=:\tilde{\beta}.
		\end{equation}
		
		}

		Recall $u$ is the arbitrary global solution satisfying \eqref{eq:data:assumption} previously chosen. Integrating the product of the equation of $u$ with $\psi u$ over $(0,T)\times \mathbb{R}^3$ and using integration by parts, we obtain 
		\begin{equation}\label{eq:id1}
			\int_0^T\int_{\mathbb{R}^3}\psi[|\nabla u|^2+|u|^2-u^4]=\int_0^T\int_{\mathbb{R}^3}\left[\frac{\Delta \psi}{2}|u|^2+\psi|\partial_t u|^2\right]-\left(\int_{ \mathbb{R}^3}\psi(\partial_t u+\frac{u}{2})u\right)\Bigg|_0^T
		\end{equation} 
		 
		 Using Cauchy-Schwarz inequality, {\color{blue}\eqref{eq:E:El}} and the defining expression of $A[\vec{u},0,T]$, we first bound
		 \begin{equation}\label{e:1}
		 	\left|\left(\int_{ \mathbb{R}^3}\psi(\partial_t u+\frac{u}{2})\cdot u \right)\Bigg|_0^T\right|\leq \tilde{C}_6 \left[E[\vec{u}(T)]+A[\vec{u},0,T]\right],
		 \end{equation} 
		where $\tilde{C}_6=\tilde{C}_6(\Lambda_0^{-1})$ is a positive constant.
		
		Thanks to our choice of $\psi$ (the supporting property of $\psi$ and the bound \eqref{eq:bound:psi}), we use Cauchy-Schwarz inequality and the defining expression of $A[\vec{u},0,T]$ to bound 
		\begin{equation}\label{e:2}
			\left|\int_0^T\int_{\mathbb{R}^3}\left[\frac{\Delta \psi}{2}|u|^2+\psi|\partial_t u|^2\right]\right|\leq \tilde{C}_7\left[A[\vec{u},0,T] +\int_0^T\|u\|_{L^2(|\cdot|\leq 2R)}^2dt \right]
		\end{equation}
		for some positive constant $\tilde{C}_7=\tilde{C}_7(\tilde{\beta},\Lambda_0^{-1})$.
		
		Applying Gagliardo-Nirenberg inequality and doing some elementary calculus with gradient estimate of $\kappa$, we use \eqref{eq:data:assumption} to conclude
		\begin{equation}\label{e:3}
			\int_0^T\int_{|x|\geq 2R}u^4dxdt\leq \tilde{C}_8\epsilon\left[\int_0^T\|u\|_{L^2(|\cdot|\leq 2R)}^2dt+ \int_0^T\int_{\mathbb{R}^3}\kappa^2|\nabla u|^2dxdt\right]
		\end{equation}
		where $\tilde{C}_8=\tilde{C}_8(\beta_1)$ is a positive constant. Using the supporting property of $\kappa$, we see that the asserted inequality \eqref{eq:u4:2} holds with constant being
		\begin{equation}\label{eq:c2:3}
			C_{2,3}:=\tilde{C}_8.
		\end{equation}
		
		Substituting the above three estimates \eqref{e:1},\eqref{e:2} and \eqref{e:3} back into \eqref{eq:id1} and using the supporting property of $\kappa$, we obtain
		\begin{equation}\label{eq:id3}
			\begin{split}
				\int_0^T\int_{|\cdot|\geq 2R} \left(|\nabla u|^2+|u|^2\right)dxdt &\leq \tilde{C}_9 \left[E[\vec{u}(T)]+A(\vec{u},0,T)\right]	+\tilde{C}_{10}\int_0^T\left\|u\right\|_{L^2(|\cdot|\leq 2R)}^2dt\\
				&\ \ \ \ \ \ \ \ \ +\tilde{C}_8\epsilon \int_0^T\int_{|\cdot|\leq 2R}|\nabla u|^2+ \tilde{C}_8\epsilon \int_0^T\int_{|\cdot|\geq 2R}|\nabla u|^2
			\end{split}
		\end{equation}
		where 
		\begin{equation*}
			\begin{split}
				\tilde{C}_9&:=\tilde{C}_6+\tilde{C}_7\\
				\tilde{C}_{10}&:=\tilde{C}_{10}(\epsilon)=\tilde{C}_{7}+\tilde{C}_8\epsilon.
			\end{split}
		\end{equation*}
		Taking $\epsilon_2$ to be even smaller (if necessary) so that 
		\begin{equation*}
			\tilde{C}_8\epsilon_2\leq \frac{1}{2},\ \mathrm{and}\ \mathrm{hence}\ \tilde{C}_8\epsilon\leq \frac{1}{2},
		\end{equation*}
		we can kick the last term on the right hand side of \eqref{eq:id3} back to left hand side, obtaining 
		\begin{equation*}
			\int_0^T\int_{|\cdot|\geq 2R} \left(|\nabla u|^2+|u|^2\right)dxdt\leq C_{2,4}\left[A[\vec{u},0,T]+E[\vec{u}(T)]+\int_0^T\|u\|_{L^2(|\cdot|\leq 2R)}^2dt\right]+C_{2.4}\epsilon\int_0^T\int_{R\leq |\cdot|\leq 2R}|\nabla u|^2
		\end{equation*}
		with
		\begin{equation}\label{eq:c2:4}
			C_{2,4}:=2\max\big(\tilde{C}_9,\tilde{C}_{10}(\epsilon_2),\tilde{C}_8\big).
		\end{equation}
		This is the asserted inequality \eqref{eq:u:nabla:2} with constant being $C_{2,4}$ in place of $C_2$. This completes the proof of the second two inequalities.
		
		Letting $C_2$ be the biggest one among \eqref{eq:c2:1},\eqref{eq:c2:2},\eqref{eq:c2:3} and \eqref{eq:c2:4} completes the proof of Lemma \ref{lem:morawetz}.
	\end{proof}

\section{Proof of Theorem \ref{thm:main:0}}	\label{sec:main}
	In this section, we are going to prove the observation inequality. Our starting point is the main estimate in Proposition \ref{prop:obs:weak} and the strategy is to suppress the second term on the right hand side of \eqref{eq:obs:week} by the first one. The key technique to execute this strategy is Ruiz's u.c.p. for wave equations on bounded domain (see Theorem \ref{thm:ruiz}).
	\begin{proof}[Proof of Theorem \ref{thm:main:0}]
		Let $\epsilon_0$, $C_0$ and $T_0$ be as in Proposition \ref{prop:obs:weak}. Take a small number $\epsilon\in(0,\epsilon_0)$ that is to be specified later on. 
		
		We argue by contradiction, assuming that the assertion does not hold for such a choice of $\epsilon$. Then, in comparison with the estimate in Proposition \ref{prop:obs:weak}, we can find a sequence of non-zero global solutions $u_n$ to \eqref{eq:kg:int} with initial data $\vec{u}_n(0)\in\mathcal{PS}^+$ satisfying 
		\begin{equation}\label{eq:assumption:small}
			E_L[\vec{u}_n(0)]\leq \epsilon
		\end{equation} 
		but
		\begin{equation}
			\frac{\int_0^T\|u_n\|_{L^2(|\cdot|\leq 4R)}^2dt}{A[\vec{u}_n,0,T]}\rightarrow_{n\rightarrow\infty}\infty
		\end{equation} 
		for each time $T\geq T_0$. It is worthy emphasizing that $T$ is independent of $\epsilon$. This divergence allows to rescale $u_n$ at least for all sufficiently large $n$ in the following way
		\begin{equation}
			w_n:=\frac{u_n}{\lambda_n}
		\end{equation}
		where 
		\begin{equation}
			\lambda_n:=\sqrt{\int_0^T\|u_n\|_{L^2(|\cdot|\leq 4R)}^2dt}.
		\end{equation}
		It then follows immediately that 
		\begin{equation}\label{eq:A:wn}
			\int_0^T\int_{\mathbb{R}^3}\alpha(x)|\partial_{t}w_n|^2dxdt \rightarrow_{n\rightarrow\infty}0,
		\end{equation}
		\begin{equation}\label{eq:wn:lower}
			1=\int_0^T\left\|w_n\right\|_{L^2(|\cdot|\leq 4R)}^2dt\leq \int_0^T\int_{\mathbb{R}^3}\left(|\nabla_{t,x}w_n|^2+w_n^2\right)dxdt,
		\end{equation}
		\begin{equation}\label{eq:wn}
			\partial^2_{tt}w_n(t,x)-\Delta w_n(t,x)+\alpha(x)\partial_tw_n(t,x)+w_n(t,x)=\lambda^2_nw_n^3(t,x).
		\end{equation}
		and
		\begin{equation}
			\lambda_n\leq \sqrt{2\int_0^TE_L[\vec{u}_n]dt}\leq \sqrt{2T\epsilon_0}.
		\end{equation}
		Thanks to this last inequality, we assume for some $\lambda\in[0,\sqrt{2T\epsilon_0}]$, there holds up to subsequences
		\begin{equation}\label{eq:lambda:cv}
			\lambda_n\rightarrow_{n\rightarrow\infty}\lambda.
		\end{equation}
		
		We next study the compactness of $\{w_n\}$. It follows from the choice of $\{u_n\}$ that
		\begin{equation}
				E[\vec{u}_n(T)]\leq C_0\left[A[\vec{u}_n;0,T]+\int_0^{T}\left\|u_n\right\|^2_{L^2(|\cdot|\leq 4R)}dt\right].
		\end{equation}
		This estimate, together with the defining expression of $w_n$, infers for all sufficiently large $n$ that 
		\begin{equation*}
			\int_0^T\int_{\mathbb{R}^3}\left(|\nabla_{t,x}w_n|^2+w_n^2\right)dxdt\leq 2CT\left[\int_0^T\int_{\mathbb{R}^3}\alpha(x)|\partial_{t}w_n|^2dxdt+1\right]
		\end{equation*}
		which gives directly via \eqref{eq:A:wn}
		\begin{equation}\label{eq:wn:H1}
			\int_0^T\int_{\mathbb{R}^3}\left(|\nabla_{t,x}w_n|^2+w_n^2\right)dxdt\leq 4CT.
		\end{equation}
		{		It then follows that $\{w_n\}$ is bounded in $H^1((0,T)\times \mathbb{R}^3)$. Thus we may assume that 
		\begin{equation}\label{eq:compactness:1}
			w_n \mathrm{~converges ~weakly~ to~ }w \mathrm{~in~} H^1((0,T)\times \mathbb{R}^3).
		\end{equation}
		}For each radius $r>0$, we use compact Sobolev embedding $H^1((0,T)\times B_r)\subset L^{\frac{4}{3}-}((0,T)\times B_r)$ to conclude that
		\begin{equation}\label{eq:compactness:2}
			w_n^3\rightarrow_{n\rightarrow\infty}w^3\ \mathrm{in}\ L^{\frac{4}{3}-}((0,T)\times B_r).
		\end{equation}  
		This enables us to use Cantor diagonal method to extract a subsequence, still denoted by $\{w_n\}$, that satisfies 
		\begin{equation}\label{eq:weak:phi}
			w_n^3 \rightarrow_{n\rightarrow\infty} w^3 \ \ \mathrm{in~the~sense~of~distributions}.
		\end{equation}
		
		In the case that $\lambda$ is strictly bigger than zero, the sequence $\{w_n\}$ enjoys better compactness result. To see this, we set $u:=\lambda w$. Then the sequence $\{u_n\}$ corresponding to $\{w_n\}$ enjoys the same convergence to $u$ with that $\{w_n\}$ does to $w$ in $H^1((0,T)\times \mathbb{R}^3)$. What's more, both sequences share the same compactness in $L^\infty((0,T),H^1(\mathbb{R}^3))$: 
		\begin{equation}\label{eq:compactness:3}
			\begin{split}
				&\mathrm{for~some~}u' \mathrm{~(resp.~}w' \mathrm{)~ in~}L^\infty((0,T),H^1(\mathbb{R}^3))\mathrm{,~the~sequence~}u_n\mathrm{~ (resp.~}w_n\mathrm{)}  \\ &\mathrm{~converges ~weakly ~to ~} u \mathrm{~(resp.~} w\mathrm{) ~in ~this ~space ~as ~} n \mathrm{~~tends ~to ~infinity.}
			\end{split}
		\end{equation}
		This follows from the fact that
		\begin{equation*}
			\{u_n\}\mathrm{~(resp.~}\{w_n\}\mathrm{)~is~bounded~in~}L^\infty((0,T),H^1(\mathbb{R}^3)),
		\end{equation*} 
		which is a direct consequence of the inequality \eqref{eq:E:El}, the fact that $E[\vec{u}(t)]$ is non-increasing in time $t$ and our assumption \eqref{eq:assumption:small}. Noting that $u=u'$ and $w=w'$ in the sense of distributions, we will only use notations $u$ and $w$ if there is no ambiguity.
		
		Therefore, we can now assume that the sequence $\{w_n\}$ satisfies the convergence relations \eqref{eq:compactness:1},\eqref{eq:compactness:2} and \eqref{eq:weak:phi}. What's more, in the case that $\lambda$ is strictly positive, it also satisfies \eqref{eq:compactness:3} with the limiting function $w$ in place of $w'$. In particular, $w\in L^\infty((0,T),H^1(\mathbb{R}^3))$. 
		
		Using convergence relations \eqref{eq:compactness:1},\eqref{eq:weak:phi} together with \eqref{eq:lambda:cv}, we see from the equations \eqref{eq:wn} satisfied by $w_n$ that $w$ is a weak solution to 
		\begin{equation}\label{eq:w}
			\partial^2_{tt}w(t,x)-\Delta w(t,x)+\alpha(x)\partial_tw(t,x)+w(t,x)=\lambda^2w^3(t,x).
		\end{equation}
		
		We are now going to show that $w$ is indeed a stationary solution. For this, we let $v:=\partial_tw$ and try to exploit the equation it obeys
		\begin{equation}\label{eq:v}
			\partial^2_{tt}v-\Delta v+\alpha \partial_{t}v +(1-3\lambda^2 w^2)v=0.
		\end{equation}
		{		Recalling that $w_n$ converges weakly to $w$ in $H^1([0,T]\times \mathbb{R}^3)$,} we may apply Fatou's lemma in \eqref{eq:A:wn} to obtain
		\begin{equation*}
			\int_0^T\int_{\mathbb{R}^3}\alpha(x)|\partial_{t}w|^2dxdt=0.
		\end{equation*}
		This implies 
		\begin{equation}\label{eq:ucp:1}
			v=\partial_tw=0\ \mathrm{on}\  (0,T)\times S
		\end{equation}
		with $S:=\{x\in \mathbb{R}^3:\alpha(x)>0\}$ and any $T>T_0$. Take an arbitrary radius $R_1>4R$. Recalling $T>T_0$ is independent of $\epsilon$, we are at free to pick a time $T>2R_1$ so that 
		\begin{equation}\label{eq:ucp:2}
			T>\mathrm{diam}(B_{R_1}).
		\end{equation}
	 	It then follows from \eqref{eq:v} that $v$ is a week solution to 
		\begin{equation}\label{eq:v0}
				\partial^2_{tt}v-\Delta v +(1-3\lambda^2 w^2)v=0 \ \mathrm{on}\ (0,T)\times B_{R_1}.
		\end{equation} 
		What's more, applying Fatou's lemma in \eqref{eq:wn:H1}, we see that $w$ is an $L^2((0,T)\times B_{R_1})$ function. Thus
		\begin{equation}\label{eq:ucp:3}
			 w \mathrm{~is ~a ~weak~} L^2((0,T)\times B_{R_1}) \mathrm{~solution~ of~ }\eqref{eq:v0}.
		\end{equation}
		The last condition we need to apply Ruiz's UCP is
		\begin{equation}\label{eq:ucp:4}
			\|(1-3\lambda^2w^2)\|_{L^\infty((0,T),L^3(B_{R_1}))}<\infty.
		\end{equation}
		We divide the proof of this result into two cases. In the case $\lambda=0$, it suffices to show $1\in L^\infty((0,T),L^3(B_{R_1}))$, which is obvious since $T$ and $R_1$ are finite. In the second case $\lambda>0$, it then follows from \eqref{eq:compactness:3} that $w\in L^\infty((0,T),H^1(\mathbb{R}^3))$. We can then use Sobolev's inequality in the spatial variable to conclude that $w^2\in L^\infty((0,T),L^3(\mathbb{R}^3))$ and hence $w^2\in L^\infty((0,T),L^3(B_{R_1}))$. Consequently, in this case, we have $(1-3\lambda^2w^2)\in  L^\infty((0,T),L^3(B_{R_1}))$ as well. This completes the proof of \eqref{eq:ucp:4}.
		
		With \eqref{eq:ucp:1},\eqref{eq:ucp:2},\eqref{eq:ucp:3} and \eqref{eq:ucp:4} at hand, we can then apply Ruiz's u.c.p. to the equation \eqref{eq:v0}, obtaining that $v$ vanishes everywhere in $(0,T)\times B_{R_1}$. Thanks to the arbitraryness of $R_1$ and the choice $T>R_1$, $v$ vanishes identically on $(0,\infty)\times \mathbb{R}^3$. Therefore, $w$ is a stationary solution satisfying 
		\begin{equation}\label{eq:w1}
			-\Delta w + w =\lambda^2w^3.
		\end{equation} 
		
		This implies $w\equiv0$. This result is to be proved by dividing into two different cases as well.
		
		In the first case $\lambda=0$, we integrate the multiplication of \eqref{eq:w1} with $w$ and use integration by parts, obtaining
		\begin{equation*}
			\int_{\mathbb{R}^3}(|\nabla w|^2+w^2)dx=0
		\end{equation*} 
		which in turn implies $w\equiv 0$.
		
		In the second case $\lambda>0$, we recall $u=\lambda w$. Substituting the expression of $u$ into \eqref{eq:w1}, we obtain 
		\begin{equation}\label{eq:u}
			-\Delta u + u =u^3.
		\end{equation}
		Integrating the multiplication of this equation with $u$ over $\mathbb{R}^3$ and using integration by parts, we obtain the equality
		\begin{equation*}
			\int_{\mathbb{R}^3}\left(|\nabla u|^2+u^2\right)dx=\int_{\mathbb{R}^3}u^4dx.
		\end{equation*}
		Denoting the left hand side of this equality by $2F$, we use Gagliardo-Nirenberg inequality to compute
		\begin{equation*}
			2F\leq C\left(\|u\|_{H^1}^{\frac{1}{2}}\|u\|_{L^2}^{\frac{1}{2}}\right)^4\leq 4CF^2
		\end{equation*}
		where $C$ is some positive constant concerning Gagliardo-Nirenberg inequality(see \cite{DelPino2002}).
		Solving this inequality, we get 
		\begin{equation}\label{eq:F}
			F\equiv 0\ \mathrm{or}\ F\geq \frac{1}{2C}
		\end{equation}
		By choosing $\epsilon$ (even smaller if necessary) to satisfy
		\begin{equation*}
			\epsilon<\min\left(\frac{1}{2C},\epsilon_0\right)\leq \frac{1}{2C},
		\end{equation*}
		we infer from \eqref{eq:F} and the relation
		\begin{equation*}
			F\leq \liminf E_{L}[\vec{u}_n(0)]\leq \epsilon
		\end{equation*}
		that $F\equiv 0$. Consequently we have $w\equiv 0$ in this case. It then follows from the strong convergence $w_n\rightarrow_{n\rightarrow\infty} w$ in $L^2((0,T)\times \mathbb{R}^3)$ and the inequality \eqref{eq:wn:lower} that 
		\begin{equation*}
			1<0
		\end{equation*} 
		which is impossible. This completes the proof of our main result.
	\end{proof}

\bibliographystyle{plain}

\end{document}